\numberwithin{equation}{section}
\newtheorem{theorem}{Theorem}[section]
\newtheorem{corollary}{Corollary}[section]
\newtheorem{lemma}[theorem]{Lemma}
\newtheorem{example}{Example}
\theoremstyle{definition}
\newtheorem{definition}[theorem]{Definition}
\newtheorem{remark}{Remark}
\newcommand{\R}{\mathbb R}
\newcommand{\h}{\mathcal{H}^{N-1}}
\def\per{\mathrm {Per}}
\def\dfrac{\displaystyle\frac}
\def\dint{\displaystyle\int}
\newcommand{\hs}{d\mathcal{H}^{n-1}}
\newcommand{\Ui}{\partial U_t^{int}}
\newcommand{\Ue}{\partial U_t^{ext}}
\newcommand{\Hm}{{\mathcal H}^{n-1}}
\title[ Talenti comparison]{ A Talenti comparison result for solutions to elliptic problems with Robin boundary conditions }
\author[ A. Alvino, C. Nitsch, C. Trombetti]{A. Alvino, C. Nitsch, C. Trombetti}
\date{}                                           
\address{\vskip1cm\noindent  \hfill\break\vskip-.2cm
\noindent  \hfill\break\vskip-.2cm
\noindent Dipartimento di Matematica e Applicazioni ``R. Caccioppoli'', Universit\`{a}
degli Studi di Napoli ``Federico II'', Complesso Universitario Monte S. Angelo, via Cintia
- 80126 Napoli, Italy. \hfill\break\vskip-.2cm
\noindent e-mail: \tt 
cristina@unina.it}
\subjclass[2010]{35J05, 35P15}
\keywords{Robin boundary conditions}
\begin{document}

\maketitle

\begin{abstract}
Comparison results of Talenti type for Elliptic Problems with Dirichlet boundary conditions have been widely investigated in the last decades. In this paper, we deal with Robin boundary conditions. Surprisingly, contrary to the Dirichlet case, Robin boundary conditions make the comparison sensitive to the dimension, and while the planar case seems to be completely settled, in higher dimensions some open problems are yet unsolved.
\end{abstract} 

\section{Introduction }

Let  $\beta $ be a positive parameter, and let $\Omega $ be an open, bounded set of $ \R^N$, $N\ge 2$ with Lipschitz boundary. For a given nonnegative ( not identically zero)  $f\in L^2(\Omega)$ we consider the following  problem
\begin{equation}\label{problem}
\left\{
\begin{array}{ll}
-\Delta u= f  & \mbox{in $\Omega$}\\\\
\dfrac{\partial u}{\partial \nu} +\beta \,u =0 & \mbox{on $\partial\Omega$,}
\end{array}
\right.
\end{equation}
where  $\nu$, denotes the outer unit normal to $\partial \Omega$. 

A function  $u \in H^1(\Omega)$ is a weak solution to \eqref{problem} if

\begin{equation}
\label{weaksol}
\int_{\Omega} \nabla u \nabla \phi \,dx + \beta\int_{\partial \Omega}u \phi \, \hs= \int_{\Omega} f \phi \,dx \quad \forall \phi \in H^1(\Omega).
\end{equation}

We will establish a comparison principle with the solution to the following problem
\begin{equation}\label{problem_sharp}
\left\{
\begin{array}{ll}
-\Delta v= f^\sharp & \mbox{in $\Omega^\sharp$}\\\\
\dfrac{\partial v}{\partial \nu} +\beta \,v =0 & \mbox{on $\partial\Omega^\sharp$.}
\end{array}
\right.
\end{equation}
where $\Omega^\sharp$ denotes the ball, centered at the origin, with the same Lebesgue measure as $\Omega$ and $f^\sharp$ is the decreasing Schwarz rearrangement of $f$.
Our main theorems are
\begin{theorem}\label{th_main_f}
Let $u$ and $v$  be the solution to Problem \eqref{problem} and to Problem  \eqref{problem_sharp}, respectively. Then we have
$$\|u\|_{L^{p,1}(\Omega)}\le \|v\|_{L^{p,1}(\Omega^\sharp)}\quad \mbox{for all } 0< p\le\frac{N}{2N-2},$$
$$\|u\|_{L^{2p,2}(\Omega)}\le \|v\|_{L^{2p,2}(\Omega^\sharp)} \quad \mbox{for all } 0< p\le\frac{N}{3N-4}.$$
\end{theorem}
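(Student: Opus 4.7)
The plan is to adapt the classical Talenti scheme—derive a differential inequality for the distribution function $\mu(t)=|\{u>t\}|$ and compare it with the exact relation satisfied by the analogous quantity $\mu_v$ for $v$—while handling, as carefully as possible, the extra term produced by the Robin boundary condition. First I would test the weak formulation \eqref{weaksol} against $\phi=(u-t)^+$ and differentiate in $t$ to obtain the basic identity
$$\int_{\{u=t\}\cap\Omega}|\nabla u|\, d\mathcal{H}^{N-1}+\beta\int_{\partial\Omega\cap\{u>t\}}u\, d\mathcal{H}^{N-1}=\int_{\{u>t\}}f\,dx \qquad\text{for a.e.~}t>0.$$
Setting $P_e(t):=\mathcal{H}^{N-1}(\partial\Omega\cap\{u>t\})$, combining the coarea formula with Cauchy--Schwarz yields $\int_{\{u=t\}\cap\Omega}|\nabla u|\,d\mathcal{H}^{N-1}\ge P_\Omega(\{u>t\})^2/(-\mu'(t))$; the Euclidean isoperimetric inequality gives $P_\Omega(\{u>t\})\ge N\omega_N^{1/N}\mu(t)^{(N-1)/N}-P_e(t)$; the trace bound $u\ge t$ on $\partial\Omega\cap\{u>t\}$ gives $\int_{\partial\Omega\cap\{u>t\}}u\,d\mathcal{H}^{N-1}\ge t P_e(t)$; and the Hardy--Littlewood inequality gives $\int_{\{u>t\}}f\,dx\le F(\mu(t))$, where $F(s):=\int_0^s f^*(r)\,dr$. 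Assembling these produces the key inequality
$$\frac{\bigl(N\omega_N^{1/N}\mu(t)^{(N-1)/N}-P_e(t)\bigr)_{+}^{2}}{-\mu'(t)}+\beta t\,P_e(t)\le F(\mu(t)).$$

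For the symmetrised problem, $v$ is radial decreasing with constant boundary value $t_0$, so every one of the preceding inequalities becomes an equality: $P_e(V_t)=0$ for $t\ge t_0$ yields the ODE
$$\frac{\bigl(N\omega_N^{1/N}\mu_v(t)^{(N-1)/N}\bigr)^{2}}{-\mu_v'(t)}=F(\mu_v(t)),\qquad t\ge t_0,$$
while the conservation identity $\beta t_0\,\mathcal{H}^{N-1}(\partial\Omega^\sharp)=\int_\Omega f\,dx$ (read off from \eqref{weaksol} with $\phi\equiv 1$) determines $t_0$.

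The principal obstacle is that $P_e(t)$ is an uncontrolled free quantity; unlike in the Dirichlet case, one cannot invert the inequality to conclude a pointwise comparison $u^\sharp\le v$ of rearrangements. To proceed I would minimise the left-hand side over $P_e\ge 0$: this produces two regimes, the Dirichlet-type bound $\pi(\mu)^2/(-\mu')\le F(\mu)$ (when the minimiser is $P_e=0$) and the boundary-dominated estimate $\beta t\,\pi(\mu)-\beta^2 t^2(-\mu')/4\le F(\mu)$ (when the minimiser is interior), where $\pi(s):=N\omega_N^{1/N}s^{(N-1)/N}$.

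To conclude in the target Lorentz norms, I would use the identities
$$\|u\|_{L^{p,1}(\Omega)}=p\int_0^\infty \mu(t)^{1/p}\,dt,\qquad \|u\|_{L^{2p,2}(\Omega)}^{2}=2p\int_0^\infty t\,\mu(t)^{1/p}\,dt,$$
multiply each regime of the main inequality by the corresponding weight ($\mu(t)^{1/p-1}$ in the first case, $t\,\mu(t)^{1/p-1}$ in the second), integrate in $t\in(0,\infty)$, and perform an integration by parts; the same manipulation applied to the equalities for $v$ then allows a direct comparison. The delicate point—and the source of the dimension-dependent thresholds—will be the sign of the coefficients of the boundary contribution that appear after integration by parts: requiring these to be non-negative amounts to an algebraic constraint on the exponent $1/p$ relative to $(N-1)/N$, which is met precisely when $p\le N/(2N-2)$ in the $L^{p,1}$ case and $p\le N/(3N-4)$ in the $L^{2p,2}$ case, displaying why the Robin comparison becomes sensitive to the dimension.
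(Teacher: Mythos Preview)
Your overall setup is fine up to the point where you obtain
\[
\frac{(\pi(\mu)-P_e)_+^{2}}{-\mu'}+\beta t\,P_e\;\le\;F(\mu),
\]
but the strategy of eliminating the unknown $P_e(t)$ by \emph{minimising} the left-hand side over $P_e\ge 0$ is where the argument breaks. Minimising produces a weaker inequality, and crucially that weakened inequality is \emph{not} attained with equality by the radial solution $v$: for $t>v_m$ one has $P_e\equiv 0$ and the exact relation for $v$ is $\pi(\mu_v)^2/(-\mu_v')=F(\mu_v)$, whereas the minimiser of your left side is generically at some $P_e^\ast>0$, so your ``regime~2'' inequality $\beta t\,\pi(\mu_v)-\tfrac14\beta^2t^2(-\mu_v')\le F(\mu_v)$ is strict for $v$. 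Once $v$ no longer saturates the inequality you are integrating, there is no mechanism to compare $\int\mu^{1/p}$ with $\int\mu_v^{1/p}$, and the proposed ``direct comparison'' in the last paragraph cannot be carried out. The vague claim that sign conditions on the boundary terms after integration by parts yield exactly the thresholds $N/(2N-2)$ and $N/(3N-4)$ is not a proof; those thresholds do arise, but from a different mechanism.

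What the paper does differently, and what you are missing, is twofold. First, instead of splitting $\partial U_t$ into interior and exterior pieces and applying Cauchy--Schwarz only on the interior one, apply Cauchy--Schwarz to the \emph{full} reduced boundary with the weight $g=|\nabla u|$ on $\partial U_t\cap\Omega$ and $g=\beta u$ on $\partial U_t\cap\partial\Omega$; this gives
\[
\gamma_N\,\mu(t)^{\frac{2N-2}{N}}\;\le\;\Bigl(-\mu'(t)+\frac{1}{\beta}\int_{\partial U_t^{\mathrm{ext}}}\frac{d\mathcal H^{N-1}}{u}\Bigr)\,F(\mu(t)),
\]
with exactly the same relation (an equality) for $v$. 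Second, and this is the key step you have no analogue of, the boundary term is controlled not by the crude bound $u\ge t$ but by the Fubini identity
\[
\int_0^\infty \tau\Bigl(\int_{\partial U_\tau^{\mathrm{ext}}}\frac{d\mathcal H^{N-1}}{u}\Bigr)d\tau
\;=\;\frac12\int_{\partial\Omega}u\,d\mathcal H^{N-1}
\;=\;\frac{F(|\Omega|)}{2\beta},
\]
which holds with \emph{equality for both} $u$ and $v$. This turns the unknown boundary contribution into a fixed constant after multiplying the differential inequality by $t\,\mu(t)^{1/p-(2N-2)/N}$ and integrating; the exponent constraint $1/p\ge(2N-2)/N$ (hence $p\le N/(2N-2)$) is exactly what allows $\mu^{1/p-(2N-2)/N}\le|\Omega|^{1/p-(2N-2)/N}$ at that step. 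The comparison is then closed by a Gronwall-type lemma applied to $\tau\mapsto\int_0^\tau F(\mu)+\int_0^\tau\!\int_0^t\gamma_N\mu^{1/p}$, not by a direct pointwise comparison of $\mu$ and $\mu_v$. The $L^{2p,2}$ estimate follows by a second iteration of the same scheme; the threshold $p\le N/(3N-4)$ comes from requiring $F(\ell)\ell^{-(2N-2)/N}$ to be nondecreasing. Without the Fubini identity and the Gronwall step, your plan does not close.
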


\begin{theorem}\label{th_main_1}
Let assume $f\equiv 1$ and let $u$ and $v$ be the solutions to Problem \eqref{problem} and to Problem \eqref{problem_sharp}, respectively. Then, when $N=2$, we have
$$u^\sharp(x)\le v(x) \quad x \in \>\mbox{in $\Omega^\sharp$}.$$
While, when $N\ge 3$, we have
$$\|u\|_{L^{p,1}(\Omega)}\le \|v\|_{L^{p,1}(\Omega^\sharp)}\quad \mbox{and}\quad \|u\|_{L^{2p,2}(\Omega)}\le \|v\|_{L^{2p,2}(\Omega^\sharp)} \quad \mbox{for all } \>0< p\le\frac{N}{N-2}.$$
\end{theorem}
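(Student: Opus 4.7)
The plan is to adapt the Talenti rearrangement method to Robin boundary conditions, taking advantage of the equality $\int_{\{u>t\}}f=\mu_u(t)$ that the special case $f\equiv 1$ makes available. First, I would take $\phi=(u-t)^+$ as test function in \eqref{weaksol} and differentiate in $t$ by the coarea formula to obtain the basic identity
\[
\int_{\{u=t\}\cap\Omega}|\nabla u|\,d\mathcal{H}^{N-1}+\beta\int_{\partial\Omega\cap\{u>t\}}u\,d\mathcal{H}^{N-1}=\mu_u(t),
\]
where $\mu_u(t)=|\{u>t\}|$. Writing $P_i(t)=\mathcal{H}^{N-1}(\{u=t\}\cap\Omega)$ and $P_e(t)=\mathcal{H}^{N-1}(\partial\Omega\cap\{u>t\})$, the Cauchy--Schwarz inequality combined with the coarea identity $\int_{\{u=t\}}|\nabla u|^{-1}\,d\mathcal{H}^{N-1}=-\mu_u'(t)$ gives
\[
P_i(t)^2\le(-\mu_u'(t))\Bigl(\mu_u(t)-\beta\int_{\partial\Omega\cap\{u>t\}}u\,d\mathcal{H}^{N-1}\Bigr),
\]
while the Euclidean isoperimetric inequality yields $P_i(t)+P_e(t)\ge N\omega_N^{1/N}\mu_u(t)^{1-1/N}$. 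The radial problem on $\Omega^\sharp$ can be integrated explicitly, giving $v(r)=R/(N\beta)+(R^2-r^2)/(2N)$, and the analogous relations for $v$ hold as equalities with $P_e^v(t)=0$ for $t>v(R)$.

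For $N=2$ I would combine the above with the layer-cake identity
\[
\int_{\partial\Omega\cap\{u>t\}}u\,d\mathcal{H}^1=tP_e(t)+\int_t^{+\infty}P_e(s)\,ds.
\]
In this dimension the isoperimetric scaling $\mu_u^{1-1/N}=\mu_u^{1/2}$ matches the $P_i^2$ produced by Cauchy--Schwarz, and the three bounds collapse into a closed ordinary differential inequality for $\mu_u(t)$ of the same form as the differential identity satisfied by $\mu_v$. A standard ODE comparison then produces $\mu_u(t)\le\mu_v(t)$ for all $t\ge v(R)$ (the range $t<v(R)$ being immediate, since $\mu_v=|\Omega|$ there), which is equivalent to the pointwise rearrangement estimate $u^\sharp\le v$ on $\Omega^\sharp$.

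For $N\ge 3$ the algebraic combination that closes in the plane no longer yields a comparison ODE, and indeed the pointwise inequality is known to fail in general. I would instead integrate the differential inequality above against suitable weights in $\mu$, bounding the perimeter defect $P_e$ by a trace-type inequality and the boundary integral by H\"older. This produces comparisons only in an integrated form; the threshold $p\le N/(N-2)$ is dictated by the critical $|x|^{2-N}$ singularity of the Green function of the Laplacian on the ball, which is the effective kernel against which the difference $v-u^\sharp$ must be tested, and this pins down which $L^{p,1}$ and $L^{2p,2}$ norms survive.

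The main obstacle is the mismatch of scalings between the three ingredients: Cauchy--Schwarz produces the quadratic term $P_i^2$, the isoperimetric inequality controls the linear sum $P_i+P_e$, and the Robin contribution $\beta\int u$ enters linearly in $P_e$. Only in $N=2$ do these scalings align well enough for a pointwise ODE argument to close; in higher dimensions the unabsorbed portion of the isoperimetric defect has to be handled by integration against the critical radial kernel, and the borderline behaviour of that kernel at the origin is exactly what determines the admissible range $p\le N/(N-2)$ in the statement.
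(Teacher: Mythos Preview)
Your proposal has a genuine gap in the core inequality. You apply Cauchy--Schwarz only on the interior slice $\{u=t\}\cap\Omega$, obtaining a bound for $P_i(t)^2$, and then invoke the isoperimetric inequality for $P_i(t)+P_e(t)$. But these two statements do not combine: you need a lower bound on $P_i(t)$ alone, and no information you list controls $P_e(t)$ from above. The claim that ``the three bounds collapse into a closed ordinary differential inequality for $\mu_u$'' is exactly where the argument fails, because the exterior perimeter $P_e$ is left floating.

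The paper closes this gap by applying Cauchy--Schwarz on the \emph{full} reduced boundary $\partial U_t$, with weight $g=|\nabla u|$ on $\partial U_t\cap\Omega$ and $g=\beta u$ on $\partial U_t\cap\partial\Omega$. This yields
\[
\gamma_N\,\mu(t)^{\frac{2N-2}{N}}\le\Bigl(-\mu'(t)+\tfrac{1}{\beta}\int_{\partial U_t^{ext}}\tfrac{1}{u}\,d\mathcal H^{N-1}\Bigr)\mu(t),
\]
so the exterior contribution appears as $\int_{\partial U_t^{ext}}u^{-1}$, not as $P_e$ or as $\int_{\partial\Omega\cap\{u>t\}}u$. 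The decisive lemma (absent from your outline) is that $\int_0^t\tau\int_{\partial U_\tau^{ext}}u^{-1}\,d\mathcal H^{N-1}\,d\tau\le |\Omega|/(2\beta)$ for every $t$, with equality for $v$ once $t\ge v_m$; this is what makes the comparison close after multiplying by $t$ and integrating. For $N=2$ the factor $\mu$ cancels and one obtains $2\pi\tau^2\le\int_0^\tau t(-d\mu)+|\Omega|/(2\beta^2)$, which compared with the equality for $\phi$ gives $\mu\le\phi$ directly.

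For $N\ge 3$ the paper does not use trace inequalities, H\"older, or Green-function kernels. One multiplies the fundamental inequality by $t\,\mu(t)^{1/p-(N-2)/N}$, integrates, and applies a Gronwall-type lemma (twice, to get both the $L^{p,1}$ and the $L^{2p,2}$ bounds). The threshold $p\le N/(N-2)$ is purely algebraic: it is the condition $1/p-(N-2)/N\ge 0$ needed so that $\mu^{1/p-(N-2)/N}\le|\Omega|^{1/p-(N-2)/N}$, which in turn allows the boundary lemma to absorb the exterior term. Your explanation via the $|x|^{2-N}$ singularity is not what drives the argument here.
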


We remind that, for $0<p<\infty$ and $0<q\le\infty$, the Lorentz space $L^{p,q}(\Omega)$ consists of all measurable functions $g$ in $\Omega$ such that it is finite the quantity 
\[
\|g\|_{L^{p,q}(\Omega)}=
\left\{
\begin{array}{ll}
\displaystyle p^\frac1q\left( \int_0^\infty t^q\left| \left\{ x\in\Omega:|g(x)|> t \right\}\right|^\frac{q}{p}\frac{dt}{t} \right)^\frac1q & 0<q<\infty,\\\\
\displaystyle \sup_{t>0}\left(t^p\left| \left\{ x\in\Omega:|g(x)|> t \right\} \right|\right) & q=\infty.
\end{array}
\right.
\]
For $p=q$ (see \cite{Hu}), Lorentz spaces coincide with $L^p$ spaces (since $\|g\|_{L^{p,p}(\Omega)}=\|g\|_{L^{p}(\Omega)}$).
Therefore, in the hypothesis of Theorem \ref{th_main_f} when $N=2$ we have $\|u\|_{L^{1}(\Omega)}\le\|v\|_{L^{1}(\Omega^\sharp)}$ and $\|u\|_{L^{2}(\Omega)}\le\|v\|_{L^{2}(\Omega^\sharp)}$.
Moreover in the hypothesis of Theorem \ref{th_main_1} when $N\ge 3$ we have $\|u\|_{L^{1}(\Omega)}\le\|v\|_{L^{1}(\Omega^\sharp)}$ and $\|u\|_{L^{2}(\Omega)}\le\|v\|_{L^{2}(\Omega^\sharp)}$.\\

Comparison results \`a la Talenti have been widely studied in the last decades, after in his seminal paper \cite{Ta} Talenti proved that, if $u$ is the solution to
\begin{equation*}
\left\{
\begin{array}{ll}
-\Delta u= f  & \mbox{in $\Omega$}\\\\
u =0 & \mbox{on $\partial\Omega$,}
\end{array}
\right.
\end{equation*}
and $v$ is the solution to
\begin{equation}
\left\{
\begin{array}{ll}
-\Delta v= f^\sharp & \mbox{in $\Omega^\sharp$}\\\\
v =0 & \mbox{on $\partial\Omega^\sharp$,}
\end{array}
\right.
\end{equation}
then $u^\sharp(x)\le v(x)$ for all $x$ in $\Omega^\sharp$.

It is impossible to make a comprehensive list of all the results developed in the wake of this fundamental achievement. Generalization to semilinear and nonlinear elliptic equations are, for instance, in \cite{ALT, Ta2}, anisotropic elliptic operators are considered for instance in \cite{AFLT}, while parabolic equation are handled for instance in \cite{ALT}. Higher order operators have been investigated for instance in \cite{AB,Ta3} and two textbooks which provide survey on Talenti's technique and collect as well many other references are \cite{Ka, K1}.
However, to our knowledge, in literature there are no comparison results related to Talenti techniques, concerning Robin boundary conditions. We mention however that, when $f=1$, it has been proved in \cite{BG15} with a completely different argument that, if $u$ and $v$ are solutions to Problem \eqref{problem} and to Problem \eqref{problem_sharp}, respectively, then $\|u\|_{L^{1}(\Omega)}\le\|v\|_{L^{1}(\Omega^\sharp)}$.

The paper is organized as follows. In the next Section we introduce some basic notations, we recall the notion of decreasing rearrangements as well as some of their basic properties. Then in Section \ref{sec_proof} we provide a detailed proof of Theorems \ref{th_main_f} -- \ref{th_main_1}. In Section \ref{sec_BD} we show an alternative proof of the Bossel-Daners inequality for planar domains. Finally, in the last Section we provide some examples, we discuss the optimality of our results, and we provide a list of open problems.

\


\section{Notation and Preliminaries}
The solution $u \in H^1(\Omega)$ to \eqref{problem} is the unique minimizer of 

\begin{equation}
\label{minimizer}
\min_{w\in H^1(\Omega)} {\displaystyle \frac12\int_\Omega |\nabla w|^2 \, dx + \frac12\beta \displaystyle\int_{\partial \Omega} w^2 \, d \Hm }-{\displaystyle\int_\Omega  f w \, dx}.
\end{equation}


For $\displaystyle t\ge 0$ we denote by 

\[
U_t=\{x \in \Omega: u(x)>t\}, \quad \Ui = \partial U_t \cap \Omega, \quad \Ue= \partial U_t \cap \partial\Omega,
\]

and by 
\[
\mu(t) = |U_t|, \quad P_u(t) = \per(U_t).
\]
the Lebesgue measure of $U_t$ and its perimeter in $\R^N$, respectively.
Moreover, $\Omega^\sharp$ denotes the ball, centered at the origin, with the same measure as $\Omega$ and $v$ denotes the unique, radial and decreasing along the radius, solution to Problem\eqref{problem_sharp}.


Then, using the same notation as above, for $t\ge 0$ we set

\[
V_t=\{x \in \Omega^\sharp: v(x)>t\}, \quad
\phi(t) = |V_t|, \quad  \quad P_v(t) = \per(V_t).
\]
Since $v$ is radial, positive and decreasing along the radius then, for $0\le t\le\min_{\Omega^\sharp} v$, $V_t$ coincides with $\Omega^\sharp$,  while, for $\min_{\Omega^\sharp} v <t<\max_{\Omega^\sharp} v$, $V_t$ is a ball concentric to $\Omega^\sharp$ and strictly contained in it. 

In what follows we denote by $\omega_N$ the measure of the unit ball in $\R^N$.

\begin{definition}
\label{rarrangement}
Let $h: x \in \Omega \rightarrow [0,+\infty[$ be a measurable function, then the decreasing rearrangement $h^*$ of $h$ is defined as follows:
\[
h^*(s) = \inf\{t  \ge 0 : |\{x\in\Omega:| h(x)|>t\}| < s\} \quad s \in [0,\Omega].
\]
while the Scwartz rearrangement of $h$ is defined as follows
\[
h^\sharp(x) = h^*(\omega_N |x|^N) \quad x \in \Omega^\sharp.
\]
\end{definition}

It is easily checked that  $h$, $h^*$  and $h^\sharp$ a are equi-distributed, i.e. 
$$
|\{x\in\Omega: |h(x)|>t\}| = |\{s\in (0,|\Omega|: h^*(s)>t\}|  = |\{x\in\Omega^\sharp: h^\sharp(x)>t\}|\quad t\ge 0
$$
and then
if $h\in L^p(\Omega)$, $ 1 \le p \le \infty$, then $h^*  \in L^p(0,|\Omega|)$,  $h^\sharp \in L^p(\Omega^\sharp)$, 
and $$||h||_{L^p(\Omega)}=||h^*||_{L^p(0,|\Omega|)}=||h^\sharp||_{L^p(\Omega^\sharp)} .$$

%

Moreover, the following inequality, known as Hardy-Littlewood inequality, holds true
\begin{equation}  \label{hl}
 \int_\Omega |h(x)g(x)|dx \le
\int_0^{|\Omega|}h^*(s)g^*(s)ds.
\end{equation}
In the applications of the theory of rearrangements to the study of partial
differential equations one often has to evaluate the integral of a nonnegative function $
f\in L^p(\Omega)$, $1 \le p \le +\infty$, on the level sets of a measurable
function $u$. By \eqref{hl} we get
\begin{equation}  \label{fu}
\int_{|u(x)|>t} f(x) dx \le \int_0^{\mu_u(t)}f^\ast(s)ds
\end{equation}
Clearly, if we take $f=u \ge 0$ in \eqref{fu} we have 
\begin{equation}\label{uu}
\int_{u(x)>t} u(x) dx = \int_0^{\mu_u(t)}u^\ast(s)ds.
\end{equation}

\section{Proof of Theorem \ref{th_main_f} and Theorem \ref{th_main_1}}\label{sec_proof}
As a premise to the proof of our main results we remind the following lemma.

\begin{lemma}[Gronwall]\label{lem_Gronwall}
Let $\xi(\tau)$ be a continuously differentiable function 
satisfying, for some non negative constant $C$, the following differential inequality  
\[
\tau \xi'(\tau) \le \xi(\tau)+C \qquad \mbox{for all $\tau\ge\tau_0>0$.}
\] 
Then we have
\begin{enumerate}[i)]
\item \[ \xi(\tau) \le \tau\frac{\xi(\tau_0)+C}{\tau_0}-C \qquad \mbox{for all $\tau\ge\tau_0$},\]
\item\label{ineq_Gronwall} \[ \xi'(\tau) \le \frac{\xi(\tau_0)+C}{\tau_0} \qquad \mbox{for all $\tau\ge\tau_0$}.\]
\end{enumerate}
\end{lemma}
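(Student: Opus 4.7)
The plan is to exploit an integrating factor that converts the differential inequality into a monotonicity statement. Since the inequality is assumed only on $[\tau_0,\infty)$ with $\tau_0>0$, division by $\tau$ and $\tau^2$ is harmless throughout. First, I would divide the hypothesis $\tau\xi'(\tau)\le\xi(\tau)+C$ by $\tau^2$ to obtain
$$\dfrac{\xi'(\tau)}{\tau}-\dfrac{\xi(\tau)+C}{\tau^2}\le 0.$$
The left-hand side is precisely $\dfrac{d}{d\tau}\left(\dfrac{\xi(\tau)+C}{\tau}\right)$, because $C$ is constant and $\xi$ is continuously differentiable by hypothesis. Therefore the auxiliary function $\tau\mapsto\dfrac{\xi(\tau)+C}{\tau}$ is non-increasing on $[\tau_0,\infty)$.

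For part (i), I would apply this monotonicity between $\tau_0$ and an arbitrary $\tau\ge\tau_0$, giving
$$\dfrac{\xi(\tau)+C}{\tau}\le\dfrac{\xi(\tau_0)+C}{\tau_0},$$
which, once multiplied by $\tau>0$ and rearranged, is exactly $\xi(\tau)\le\tau\dfrac{\xi(\tau_0)+C}{\tau_0}-C$.

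For part (ii), I would rewrite the original hypothesis in the equivalent form $\xi'(\tau)\le\dfrac{\xi(\tau)+C}{\tau}$ and chain it with the monotonicity bound just established, obtaining
$$\xi'(\tau)\le\dfrac{\xi(\tau)+C}{\tau}\le\dfrac{\xi(\tau_0)+C}{\tau_0},$$
which is precisely the claim.

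There is essentially no obstacle here: once the integrating factor $1/\tau^2$ is identified, both conclusions follow from one monotonicity observation plus a substitution. The only hypotheses actually used are the $C^1$ regularity of $\xi$ (to recognize the total derivative) and $\tau_0>0$ (to divide by $\tau$), both of which are explicitly assumed in the statement.
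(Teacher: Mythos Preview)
Your argument is correct: recognizing that the hypothesis is equivalent to $\dfrac{d}{d\tau}\left(\dfrac{\xi(\tau)+C}{\tau}\right)\le 0$ immediately yields the monotonicity of $(\xi(\tau)+C)/\tau$, from which both (i) and (ii) follow exactly as you describe. The paper itself does not supply a proof of this lemma---it merely states it as a standard Gronwall-type fact---so there is nothing to compare against; your integrating-factor argument is the natural and complete justification.
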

The main ingredient for a comparison result is the following lemma.
\begin{lemma}
Let $u$ and $v$ be the solution to \eqref{problem} and \eqref{problem_sharp}, respectively.
For a.e. $t>0$ we have
\begin{equation}\label{eq_fundamental}\gamma_N \phi(t)^\frac{2N-2}{N}=\left(-\phi'(t)+\dfrac{1}{\beta} \dint_{\partial V_t\cap \partial\Omega^\sharp} \dfrac{1}{v(x)} \>d\h(x) \right)\int_0^{\phi(t)}f^*(s)ds,
\end{equation}
while for almost all $t>0$ it holds
\begin{equation}\label{ineq_fundamental}\gamma_N \mu(t)^\frac{2N-2}{N}\le\left(-\mu'(t)+\dfrac{1}{\beta} \dint_{\Ue} \dfrac{1}{u(x)} \>d\h(x) \right)\int_0^{\mu(t)}f^*(s)ds.
\end{equation}
Here $\gamma_N=N^2\omega_N^{-\frac{2}{N}}$.
\end{lemma}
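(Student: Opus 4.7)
The plan is to mimic Talenti's original scheme but to treat the interior level boundary $\Ui$ and the boundary trace set $\Ue$ on equal footing; heuristically the Dirichlet case corresponds to $\beta=+\infty$, in which case $\Ue$ disappears and the classical argument is recovered. As a first step I would test the weak formulation \eqref{weaksol} against the Lipschitz truncation $\phi_h=\min\{(u-t)_+,h\}$, divide by $h$, and let $h\to 0^+$. The gradient term yields $\int_{\Ui} |\nabla u|\,d\h$ by the coarea formula, the boundary term converges (for a.e.\ $t$) to $\beta \int_{\Ue} u\,d\h$ by dominated convergence on the trace of $u$, and the right hand side tends to $\int_{U_t} f\,dx$, which by Hardy--Littlewood \eqref{fu} is bounded above by $\int_0^{\mu(t)} f^*(s)\,ds$. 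Thus for a.e.\ $t>0$ one obtains
\[
\int_{\Ui} |\nabla u|\,d\h + \beta \int_{\Ue} u\,d\h \;\le\; \int_0^{\mu(t)} f^*(s)\, ds.
\]

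The second and crucial ingredient is a weighted Cauchy--Schwarz inequality applied to the full perimeter $P_u(t) = \h(\Ui)+\h(\Ue)$, with weight $|\nabla u|$ on $\Ui$ and $\beta u$ on $\Ue$:
\[
P_u(t)^2 \;\le\; \Bigl( \int_{\Ui} |\nabla u|\,d\h + \beta \int_{\Ue} u\,d\h \Bigr) \Bigl( \int_{\Ui} \frac{d\h}{|\nabla u|} + \frac{1}{\beta}\int_{\Ue} \frac{d\h}{u} \Bigr).
\]
This choice of weights is precisely what produces the $1/u$ integrand appearing in \eqref{ineq_fundamental}. Combining the above with the isoperimetric inequality $P_u(t)\ge N\omega_N^{1/N}\mu(t)^{(N-1)/N}$ (whose square supplies the $\gamma_N \mu(t)^{(2N-2)/N}$ prefactor) and the coarea identity $-\mu'(t) = \int_{\Ui} d\h/|\nabla u|$, valid for a.e.\ $t$, delivers \eqref{ineq_fundamental} at once.

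Finally, for the radial decreasing solution $v$ every inequality above is saturated: $|\nabla v|$ is constant on $\{v=t\}\cap \Omega^\sharp$ and $v$ is constant on $\partial V_t\cap \partial \Omega^\sharp$ when nonempty (i.e.\ for $t\le\min_{\Omega^\sharp}v$), so Cauchy--Schwarz becomes an equality; the sets $V_t$ are balls, so the isoperimetric inequality is sharp; and $\int_{V_t} f^\sharp\,dx=\int_0^{\phi(t)} f^*(s)\,ds$ by radial symmetry. This yields \eqref{eq_fundamental}. The step I expect to be most delicate is the identification of the correct weighted Cauchy--Schwarz inequality in Step 2: the Dirichlet argument offers no hint of what weight to assign to $\Ue$, and recognizing that $\beta u$ is the natural analogue of $|\nabla u|$ on the boundary is both the key idea and the reason $1/u$ appears in the final bracket.
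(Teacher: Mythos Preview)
Your proposal is correct and follows essentially the same argument as the paper: the paper also tests \eqref{weaksol} against the truncation $\min\{(u-t)_+,h\}$, passes to the limit via coarea to obtain $\int_{\Ui}|\nabla u|\,d\h+\beta\int_{\Ue}u\,d\h=\int_{U_t}f\,dx$, applies the very same weighted Cauchy--Schwarz inequality with weight $g=|\nabla u|$ on $\Ui$ and $g=\beta u$ on $\Ue$, and then combines the isoperimetric inequality, the coarea identity for $-\mu'(t)$, and Hardy--Littlewood to conclude. The equality case for $v$ is likewise argued by noting that every inequality is saturated in the radial setting.
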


\begin{proof}

Let $t>0$ and  $h>0$, and let us choose the following test function in \eqref{weaksol} 
\begin{equation}
\varphi_h(x)= \left\{
\begin{array}{ll}
0 & \mbox{if $0<u<t$}\\\\
h & \mbox{if $u> t+h$} \\\\
u-t  &\mbox{if $t<u<t+h$}.
\end{array}
\right.
\end{equation}

Then,

\begin{equation}
\begin{array}{ll}
 \dint_{U_t \setminus  U_{t+h}} |\nabla u|^2 \, dx + \beta h \dint_{\partial U_{t+h}^{ext}} u  \, d\h(x)+ & \beta \dint_{\partial U_{t}^{ext} \setminus \partial U_{t+h}^{ext}} u (u-t) \, d\h(x)=  \\\\
& \dint_{U_t \setminus U_{t+h}} f (u-t) \, dx + h \dint_{U_{t+h} } f \, dx
\end{array}
\end{equation}
dividing by $h$ and letting $h$ go to, using coarea formula we have that for a.e. $t>0$

\begin{equation}
\int_{\partial U_t} g(x) \,d\h(x) = \int_{ \Ui} |\nabla u| \, d\h(x) + \beta \int_{ \Ue} u d\h(x) = \int_{U_t} f\, dx
\end{equation}

where
\[
g(x) = \left\{ \begin{array}{ll}
|\nabla u| & \mbox{if $x \in \Ui$}\\\\
\beta u & \mbox{if $x \in \Ue$}\\\\
\end{array}
\right.
\]
for a.e.$t>0$ we have
\begin{equation}
\label{estimate1}
\begin{array}{ll}
 P_u^2(t) & \leq  \left(\dint_{\partial U_t} g(x) d\h(x)  \right) \left(\dint_{\partial U_t}  g(x)^{-1} d\h(x)  \right)= \\ \\
&\left( \dint_{\partial U_t} g(x) d\h(x)  \right)\left(\dint_{\Ui } |\nabla u |^{-1} d\h(x)  + \dint_{\Ue}   (\beta u )^{-1} d\h(x) \right) \leq \\ \\
&\displaystyle\int_0^{\mu(t)}f^*(s)ds \left( -\mu'(t) + \dfrac{1}{\beta} \dint_{\Ue} \dfrac{1}{u} \>d\h(x)  \right) \quad t \in [0,\max_{\Omega} u).
\end{array}
\end{equation}
Then the isoperimetric inequality gives \eqref{ineq_fundamental}.
If $v$ solves Problem \eqref{problem}, all the previous inequalities hold as equalities hence \eqref{eq_fundamental} follows.
\end{proof}

\begin{remark}
We observe that solutions $u$ and $v$ to Problem \eqref{problem} and Problem \eqref{problem_sharp}, always achieve their minima on the boundary of $\Omega$ and $\Omega^\sharp$ respectively. From now on we denote by $$v_m=\min_{\Omega^\sharp} v, \>u_m=\min_{\Omega} u.$$ The following inequality holds true
\begin{equation}\label{ineq_bordo}
u_m\le v_m.
\end{equation}
In fact, using the equations and the boundary conditions in \eqref{problem} and \eqref{problem_sharp}, 
\begin{equation}
\begin{array}{rl}
v_m \per(\Omega^\sharp)=&\dint_{\partial\Omega^\sharp}v(x)\>d\h(x)\\\\
=&\dint_{\partial\Omega}u(x)\>d\h(x)\ge u_m \per(\Omega)\ge u_m \per(\Omega^\sharp).
\end{array}
\end{equation}
An important consequence of \eqref{ineq_bordo}, is that 
\begin{equation}\label{ineq_iniziale}
\mu(t)\le \phi(t)=|\Omega|\qquad \mbox{for all $0\le t \le v_m$.}
\end{equation}
With strict inequality for some $0\le t \le v_m$ unless $\Omega$ is a ball.
\end{remark}

A fundamental lemma which allows us to estimate the boundary integral on the right hand side on \eqref{estimate1} is the following.
\begin{lemma}\label{lem_boundary}
For all $t\ge v_m$ we have
\begin{equation}\label{eq_boundary}
\int_0^t \tau\left(\dint_{\partial V_\tau\cap \partial\Omega^\sharp} \dfrac{1}{v(x)} \> d\h(x) \right)\,d\tau = \frac{\dint_0^{|\Omega|}f^*(s)ds}{2\beta},
\end{equation}
while
\begin{equation}\label{ineq_boundary}
\int_0^t \tau\left(\dint_{\partial U_\tau^{ext}} \dfrac{1}{u(x)} \>d\h(x) \right)\,d\tau \le \frac{\dint_0^{|\Omega|}f^*(s)ds}{2\beta}.
\end{equation}
\end{lemma}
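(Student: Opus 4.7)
The plan is to prove both statements by combining Fubini's theorem with the divergence theorem applied to \eqref{problem} and \eqref{problem_sharp}. The divergence theorem will identify the boundary integral $\int_{\partial\Omega}u\,d\mathcal{H}^{N-1}$ (respectively $\int_{\partial\Omega^\sharp}v\,d\mathcal{H}^{N-1}$) with $\beta^{-1}\int f$, and this will be exactly what appears after the time integration is carried out.

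For the equality \eqref{eq_boundary} I would exploit that $v$ is radial and strictly decreasing along the radius, so that $V_\tau=\Omega^\sharp$ for $\tau\le v_m$ while $V_\tau\subsetneq\Omega^\sharp$ and $\partial V_\tau\cap\partial\Omega^\sharp=\emptyset$ for $\tau>v_m$. Moreover $v\equiv v_m$ on $\partial\Omega^\sharp$, so for every $t\ge v_m$
\[
\int_0^t\tau\Big(\int_{\partial V_\tau\cap\partial\Omega^\sharp}\frac{1}{v}\,d\mathcal{H}^{N-1}\Big)d\tau=\int_0^{v_m}\tau\,\frac{\per(\Omega^\sharp)}{v_m}\,d\tau=\frac{v_m\per(\Omega^\sharp)}{2}.
\]
Integrating $-\Delta v=f^\sharp$ over $\Omega^\sharp$, applying the Robin condition and the fact that $v=v_m$ on $\partial\Omega^\sharp$ gives $\beta v_m\per(\Omega^\sharp)=\int_0^{|\Omega|}f^*(s)\,ds$, which is exactly \eqref{eq_boundary}.

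For the inequality \eqref{ineq_boundary} I would first note that the integrand is nonnegative in $\tau$, so the left-hand side is nondecreasing in $t$ and bounded above by its limit as $t\to\max_\Omega u$. Writing $\partial U_\tau^{ext}=\{x\in\partial\Omega:u(x)>\tau\}$ (valid up to an $\mathcal{H}^{N-1}$-null set for a.e. $\tau$, by Sard/coarea) and applying Fubini yields
\[
\int_0^{\max_\Omega u}\tau\int_{\partial U_\tau^{ext}}\frac{1}{u}\,d\mathcal{H}^{N-1}\,d\tau=\int_{\partial\Omega}\frac{1}{u(x)}\int_0^{u(x)}\tau\,d\tau\,d\mathcal{H}^{N-1}(x)=\frac{1}{2}\int_{\partial\Omega}u\,d\mathcal{H}^{N-1}.
\]
Then integrating $-\Delta u=f$ over $\Omega$ and using the Robin condition gives $\beta\int_{\partial\Omega}u\,d\mathcal{H}^{N-1}=\int_\Omega f\,dx=\int_0^{|\Omega|}f^*(s)\,ds$, producing the bound.

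I do not foresee a serious obstacle: both computations are direct once one identifies the right Fubini swap. The only mildly delicate step is the measure-theoretic identification of the ``external boundary'' $\partial U_\tau^{ext}$ with the super-level trace $\{u>\tau\}\cap\partial\Omega$; this is standard but worth flagging so that Fubini can be applied unambiguously. It is also worth emphasizing that the inequality in \eqref{ineq_boundary} in fact holds for every $t\ge 0$, but the lemma's formulation for $t\ge v_m$ is exactly what will be needed later, paired with \eqref{ineq_iniziale} which ensures $\mu(t)\le\phi(t)$ in the lower range.
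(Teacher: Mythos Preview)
Your proposal is correct and follows essentially the same approach as the paper: both use Fubini to rewrite the time integral as $\tfrac12\int_{\partial\Omega}u\,d\mathcal{H}^{N-1}$ and then invoke the Robin condition together with the divergence theorem to identify this with $\tfrac{1}{2\beta}\int_0^{|\Omega|}f^*(s)\,ds$. The only cosmetic difference is that for \eqref{eq_boundary} you exploit the constancy $v\equiv v_m$ on $\partial\Omega^\sharp$ to compute the $\tau$-integral directly, whereas the paper runs the same Fubini argument as for $u$ and then observes that $\partial V_\tau\cap\partial\Omega^\sharp=\emptyset$ for $\tau>v_m$; these are the same computation in different order.
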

\begin{proof}
By Fubini's theorem and using \eqref{problem} we have
\begin{equation*}
\begin{array}{ll}
\dint_0^\infty \tau\left(\dint_{\partial U_\tau^{ext}} \dfrac{1}{u(x)} \>d\h(x) \right)\,d\tau &= \dint_{\partial \Omega} \left(\int_0^{u(x)}\dfrac{\tau}{u(x)} \>d\tau\, \right)d\h(x)\\
&= \dint_{\partial \Omega} \dfrac{u(x)}2 \>d\h(x)=\frac{\dint_0^{|\Omega|}f^*(s)ds}{2\beta}.
\end{array}
\end{equation*}
Analogously,
\begin{equation*}
\dint_0^\infty \tau\dint_{\partial V_\tau\cap\partial\Omega^\sharp} \dfrac{1}{v(x)} \>d\h(x) \,d\tau
=\frac{\dint_0^{|\Omega|}f^*(s)ds}{2\beta}.
\end{equation*}
Therefore, one trivial inequality for $t\ge 0$ is
\[
\dint_0^t \tau\dint_{\partial U_\tau^{ext}} \dfrac{1}{u(x)} \>d\h(x) \,d\tau \le \dint_0^\infty \tau\dint_{\partial U_\tau^{ext}} \dfrac{1}{u(x)} \>d\h(x) \,d\tau,
\]
while we observe that for $t\ge v_m=\min_{\partial\Omega^\sharp} v$ then $\partial V_t \cap \partial \Omega^\sharp = \emptyset$
\[
\dint_0^t \tau\dint_{\partial V_\tau\cap\partial\Omega^\sharp} \dfrac{1}{v(x)} \>d\h(x) \,d\tau=\dint_0^\infty \tau\dint_{\partial V_\tau\cap\partial\Omega^\sharp} \dfrac{1}{v(x)} \>d\h(x) \,d\tau.
\]
\end{proof}

\begin{proof}[Proof of Theorem \ref{th_main_f}]
Let $0 <p\le\frac{N}{2N-2}$. We start by multiplying \eqref{ineq_fundamental} by $t\mu(t)^{\frac{1}{p}-\frac{2N-2}{N}}$, then we integrate from $0$ to some $\tau\ge v_m$, and we use Lemma \ref{lem_boundary}, deducing
\begin{equation*}
\begin{array}{rl}
\dint_0^\tau \gamma_N t\mu(t)^\frac1p \,dt \le& \dint_0^\tau-\mu'(t)t\mu(t)^{\delta}\left(\int_0^{\mu(t)}f^*(s)ds\right)\,dt\\\\
&+\dfrac{|\Omega|^{\delta}}{2\beta^2}{\left(\dint_0^{|\Omega|}f^*(s)ds\right)^2}.
\end{array}
\end{equation*}
Here we have used \eqref{ineq_boundary} and we have set $\displaystyle\delta=\frac{1}{p}-\frac{2N-2}{N}$.
Even if $\mu(t)$ is not necessarily absolutely continuous, using the fact that $\mu(t)$ is a monotone non increasing function, we can conclude that
\begin{equation}\label{ineq_basic}
\begin{array}{rl}
\dint_0^\tau \gamma_N t\mu(t)^\frac1p dt\le& \dint_0^\tau-t\mu(t)^{\delta}\left(\int_0^{\mu(t)}f^*(s)ds\right)\,d\mu(t)\\\\
&+\dfrac{|\Omega|^{\delta}}{2\beta^2}{\left(\dint_0^{|\Omega|}f^*(s)ds\right)^2}.
\end{array}
\end{equation}
This allows us to integrate by parts both sides of the last inequality, but first we set $F(\ell)=\dint_0^\ell w^\delta\left(\int_0^w f^*(s)ds\right)\,dw$. Hence we have for $\tau \ge v_m$
\begin{equation*}
\begin{array}{rl}
\tau F(\mu(\tau))+\tau\dint_0^\tau \gamma_N \mu(t)^\frac1p dt\le& \dint_0^\tau F(\mu(t))dt+\int_0^\tau\int_0^t \gamma_N\mu(t)^\frac{1}{p} dr\, dt\\\\ 
&\displaystyle+\frac{|\Omega|^{\delta}}{2\beta^2}{\left(\dint_0^{|\Omega|}f^*(s)ds\right)^2}.
\end{array}
\end{equation*}
Using the same notation of Lemma \ref{lem_Gronwall} we set $$\xi(\tau)=\int_0^\tau F(\mu(t))dt+\int_0^\tau\left(\int_0^t \gamma_N\mu(r)^\frac{1}{p} dr\right)\, dt,$$ $C=\frac{|\Omega|^{\delta}}{2\beta^2}{\left(\dint_0^{|\Omega|}f^*(s)ds\right)^2},$ and $\tau_0=v_m$. Thereafter we deduce from Lemma \ref{lem_Gronwall} that 
\begin{equation}\label{ineq_afterGu}
\begin{array}{ll}
F(\mu(\tau))&+\dint_0^\tau \gamma_N \mu(t)^\frac1p dt\le\dfrac{1}{v_m}\Bigg(\int_0^{v_m} F(\mu(t))dt\\\\
&+\dint_0^{v_m}\int_0^t \gamma_N\mu(r)^\frac{1}{p} dr\, dt 
+ \dfrac{|\Omega|^{\delta}}{2\beta^2}{\left(\dint_0^{|\Omega|}f^*(s)ds\right)^2}\,\Bigg).
\end{array}
\end{equation}
Similarly, arguing in the same way with \eqref{eq_fundamental}, we can deduce
\begin{equation}\label{ineq_afterGv}
\begin{array}{ll}
F(\phi(\tau))&+\dint_0^\tau \gamma_N \phi(t)^\frac1p dt= \dfrac{1}{v_m}\Bigg(\int_0^{v_m} F(\phi(t))dt\\\\
&+\dint_0^{v_m}\dint_0^t \gamma_N\phi(r)^\frac{1}{p} dr\, dt + \frac{|\Omega|^{\delta}}{2\beta^2}{\left(\dint_0^{|\Omega|}f^*(s)ds\right)^2}\,\Bigg)=\\\
& F(|\Omega|) + \dfrac{\gamma_N |\Omega|^\frac{1}{p} v_m}{2} +\frac{|\Omega|^{\delta}}{2\beta^2}{\left(\dint_0^{|\Omega|}f^*(s)ds\right)^2}.
\end{array}
\end{equation}
Taking into account \eqref{ineq_bordo}, it is possible a direct comparison between the righthand sides \eqref{ineq_afterGu} and \eqref{ineq_afterGv}, yielding
$$F(\phi(\tau))+\int_0^\tau \gamma_N \phi(t)^\frac1p\ge F(\mu(\tau))+\int_0^\tau \gamma_N \mu(t)^\frac1p.$$
Passing to the limit as $\tau\to \infty$ we get
$$\int_0^\infty \mu(t)^\frac1p dt\le\dint_0^\infty \phi(t)^\frac1p dt,$$
and hence 
$$\|u\|_{L^{p,1}(\Omega)}\le \|v\|_{L^{p,1}(\Omega^\sharp)}.$$

To conclude the proof we show that  $\mbox{for all } 0< p\le\frac{N}{3N-4}$
$$ \int_0^\infty t\mu(t)^\frac1p dt\le \int_0^\infty t\phi(t)^\frac1p dt.$$
To this aim, we consider the limit as $\tau\to\infty$ in \eqref{ineq_basic} and integrate by parts the first term on the righthand side, to obtain
$$\int_0^\infty \gamma_N t\mu(t)^\frac1p dt \le \int_0^\infty F(\mu(t))dt+\frac{|\Omega|^{\delta}}{2\beta^2}{\left(\dint_0^{|\Omega|}f^*(s)ds\right)^2}.$$
On the other hand $$\int_0^\infty \gamma_N t\phi(t)^\frac1p dt=\int_0^\infty F(\phi(t))dt+\frac{|\Omega|^{\delta}}{2\beta^2}{\left(\dint_0^{|\Omega|}f^*(s)ds\right)^2}.$$
Therefore it is enough to show that 
\begin{equation}\label{ineq_desired}
\int_0^\infty F(\mu(t))dt\le \int_0^\infty F(\phi(t))dt.
\end{equation}
This can be done for instance multiplying \eqref{ineq_fundamental} by $t F(\mu(t)) \mu(t)^{-\frac{2N-2}{N}}.$
Since $\mbox{for } 0< p\le\frac{N}{3N-4}$the function $F(\ell) \ell^{-\frac{2N-2}{N}}$ is non decreasing in $\ell$, an integration from $0$ to any $\tau\ge v_m$ yields
\begin{equation}\label{ineq_F}
\begin{array}{ll}
\dint_0^\tau \gamma_N tF(\mu(t))dt \le& \dint_0^\tau-t\mu^{-\frac{2N-2}{N}}F(\mu(t))\left(\dint_0^{\mu(t)}f^*(s)ds \right)\,d\mu(t)\\
&+F(|\Omega|)\dfrac{|\Omega|^{-\frac{2N-2}{N}}}{2\beta^2}{\left(\dint_0^{|\Omega|}f^*(s)ds\right)^2}.
\end{array}
\end{equation}
We now set $C=F(|\Omega|)\dfrac{|\Omega|^{-\frac{2N-2}{N}}}{2\beta}{\left(\dint_0^{|\Omega|}f^*(s)ds\right)^2}$ and $H(\ell)=\dint_0^\ell w^{-\frac{2N-2}{N}}F(w)\int_0^{w}f^*(s)ds\,dw,$ and after an integration by parts on both sides of \eqref{ineq_F} we have
$$\tau\int_0^\tau\gamma_N F(\mu(t))dt+\tau H(\mu(\tau))\le \int_0^\tau\int_0^r\gamma_N F(\mu(z))dz\,dr+\int_0^\tau H(\mu(t))dt+C.$$
As before we use Lemma \ref{lem_Gronwall} with $\xi(\tau)=\dint_0^\tau\int_0^r\gamma_N F(\mu(z))dz\,dr+\int_0^\tau H(\mu(t))dtdt$, and $\tau_0=v_m$. Thereafter we have
\begin{equation*}
\begin{array}{ll}
&\dint_0^\tau\gamma_N F(\mu(t))dt+H(\mu(\tau))\\\\
\le&\dfrac{1}{v_m}\left(\dint_0^{v_m}\dint_0^r\gamma_N F(\mu(z))dz\,dr+\int_0^{v_m} H(\mu(t))dt+C\right).
\end{array}
\end{equation*}
Since the previous inequality holds as an equality whenever $\mu$ is replaced by $\phi$ we easily deduce 
$$\int_0^\tau\gamma_N F(\mu(t))dt+ H(\mu(\tau))\le\int_0^\tau\gamma_N F(\phi(t))dt+ H(\phi(\tau))$$ and for $\tau \to \infty$ the desired inequality \eqref{ineq_desired} which concludes the proof.

\end{proof}

\begin{proof}[Proof of the Theorem \eqref{th_main_1}]
We start with the case $N=2$. We multiply by $t$\ref{ineq_fundamental}  and we  integrate from $0$ to $\tau$ choosing $\tau \ge v_m$. Taking into account that now $$\int_0^{\mu(t)}f^*(s)ds=\mu(t),$$ we have
 \begin{equation*}
2 \pi \tau^2  \leq \dint_0^\tau -\mu'(t) t \,dt+ \dfrac{|\Omega|}{2 \beta^2} \quad \mbox{for } \tau\ge v_m.
\end{equation*}
At the same time equality holds true whenever $\mu$ is replaced by $\phi$ and therefore
 \begin{equation*}
2 \pi \tau^2  =\dint_0^\tau -\phi'(t) t \,dt+ \dfrac{|\Omega|}{2 \beta^2} \quad \mbox{for } \tau\ge v_m.
\end{equation*}
Then,
\begin{equation}\label{estimate6}
\dint_0^\tau  t \,(-d \mu(t) )\geq \dint_0^\tau  t \,(-d \phi(t) ) \quad \tau \ge v_m.
\end{equation}
Then and integration by parts gives
\begin{equation}
\label{estimate7}
\mu(\tau) \leq \phi(\tau) \quad \tau \ge v_m.
\end{equation}
Since \eqref{ineq_bordo} is in force, inequality \eqref{estimate7} follows for $ t\ge 0$ and the claim is proved.

Now we consider $N\ge3$. Equation \eqref{ineq_fundamental} reads as follows
\begin{equation}\label{ineq_fund1}
\gamma_N \mu(t)^\frac{N-2}{N}\le\left(-\mu'(t)+\dfrac{1}{\beta} \dint_{\Ue} \dfrac{1}{u(x)} \>d\h(x) \right).
\end{equation}
Let $q\le\frac{N}{N-2}$. We multiply \eqref{ineq_fundamental} by $t\mu(t)^{\frac{1}{q}-\frac{N-2}{N}}$ and integrate from $0$ to some $\tau\ge v_m$. Then we use Lemma \ref{lem_boundary} to deduce
\begin{equation*}
\int_0^\tau \gamma_N t\mu(t)^\frac1q \,dt \le \int_0^\tau-\mu'(t)t\mu(t)^{\eta}dt+\frac{|\Omega|^{\eta+1}}{2\beta^2}.
\end{equation*}

Here we have used \eqref{ineq_boundary} and we have set $\displaystyle\eta=\frac{1}{q}-\frac{N-2}{N}$.
As before, using the fact that $\mu(t)$ is a monotone non increasing function, we can write
\begin{equation}\label{ineq_basic1}
\int_0^\tau \gamma_N t\mu(t)^\frac1q dt\le \int_0^\tau-t\mu(t)^{\eta}\,d\mu(t)+\frac{|\Omega|^{\eta+1}}{2\beta^2}.
\end{equation}
We set $G(\ell)=\dint_0^\ell w^\eta= \frac{\ell^{\eta+1}}{\eta+1} $ and after an integration by parts the last inequality reads
\begin{equation*}
\tau G(\mu(\tau))+\tau\int_0^\tau \gamma_N \mu(t)^\frac1q dt\le \int_0^\tau G(\mu(t))dt +\int_0^\tau\int_0^t \gamma_N\mu(t)^\frac{1}{q} dr\, dt+\frac{|\Omega|^{\eta+1}}{2\beta^2}.
\end{equation*}
We can then use Lemma \ref{lem_Gronwall} with $$\xi(\tau)=\int_0^\tau G(\mu(t))dt+\int_0^\tau\int_0^t \gamma_N\mu(t)^\frac{1}{q} dr\, dt,$$ $C=\frac{|\Omega|^{\eta}}{2\beta}{\left(\dint_0^{|\Omega|}f^*(s)ds\right)}$, and $\tau_0=v_m$, to deduce from Lemma \ref{lem_Gronwall} \ref{ineq_Gronwall}) that
\begin{equation}\label{ineq_1Gu}
\begin{array}{ll}
\displaystyle G(\mu(\tau))&+\displaystyle\int_0^\tau \gamma_N \mu(t)^\frac1q dt\le \frac{1}{v_m}\Bigg(\int_0^{v_m} G(\mu(t))dt\\\\
& \displaystyle+\int_0^{v_m}\int_0^t \gamma_N\mu(r)^\frac{1}{q} dr\, dt + \frac{|\Omega|^{\eta+1}}{2\beta^2}\Bigg).
\end{array}
\end{equation}
Similarly, arguing in the same way with \eqref{eq_fundamental}, we can deduce
\begin{equation}\label{ineq_1Gv}
\begin{array}{ll}
\displaystyle G(\phi(\tau))&+\displaystyle\int_0^\tau \gamma_N \phi(t)^\frac1q dt= \frac{1}{v_m}\Bigg(\int_0^{v_m} G(\phi(t))dt\\\\
&\displaystyle+\int_0^{v_m}\int_0^t \gamma_N\phi(r)^\frac{1}{q} dr\, dt + \frac{|\Omega|^{\eta+1}}{2\beta^2}\Bigg).
\end{array}
\end{equation}
Taking into account \eqref{ineq_bordo}--\eqref{ineq_iniziale}, it is possible a direct comparison between the righthand sides \eqref{ineq_1Gu} and \eqref{ineq_1Gv}, yielding
$$G(\mu(\tau))+\int_0^\tau \gamma_N \mu(t)^\frac1q\le G(\phi(\tau))+\int_0^\tau \gamma_N \phi(t)^\frac1q.$$
Passing to the limit as $t\to \infty$ we get
$$\int_0^\infty \mu(t)^\frac1q dt\le\int_0^\infty \phi(t)^\frac1q dt$$
and hence 
$$\|u\|_{L^{q,1}(\Omega)}\le \|v\|_{L^{q,1}(\Omega^\sharp)}.$$

To conclude the proof we have to show that 
$$ \int_0^\infty t\mu(t)^\frac1q dt\le \int_0^\infty t\phi(t)^\frac1q dt.$$
To this aim, we consider the limit as $\tau\to\infty$ in \eqref{ineq_basic1} and integrating by parts the first term on the righthand side, to obtain
$$\int_0^\infty \gamma_N t\mu(t)^\frac1q \le \int_0^\infty G(\mu(t))dt+\frac{|\Omega|^{\eta+1}}{2\beta^2}.$$
On the other hand $$\int_0^\infty \gamma_N t\phi(t)^\frac1q =\int_0^\infty G(\phi(t))dt+\frac{|\Omega|^{\eta+1}}{2\beta^2}.$$
Therefore it is enough to show that 
\begin{equation}\label{ineq_desired1}
\int_0^\infty G(\mu(t))dt\le \int_0^\infty G(\phi(t))dt.
\end{equation}
This can be done for instance multiplying \eqref{ineq_fund1} by $t G(\mu(t)) \mu(t)^{-\frac{N-2}{N}}.$
Since the function $G(\ell) \ell^{-\frac{N-2}{N}} = \ell^{\eta+ 2/N}$ is non decreasing in $\ell$, an integration from $0$ to any $\tau\ge v_m$ yields
\begin{equation}\label{ineq_G}
\begin{array}{ll}
\displaystyle\int_0^\tau \gamma_N tG(\mu(t))dt \le&\displaystyle \int_0^\tau-t\mu^{-\frac{N-2}{N}}G(\mu(t))\,d\mu(t)\\\\
&\displaystyle+G(|\Omega|)\frac{|\Omega|^{\frac{2}{N}}}{2\beta^2}.
\end{array}
\end{equation}
We now set $C=G(|\Omega|)\frac{|\Omega|^{\frac{2}{N}}}{2\beta}$, $J(\ell)=\dint_0^\ell w^{-\frac{N-2}{N}}G(w)\,dw$ and after an integration by parts on both sides of \eqref{ineq_G} we have
$$\tau\int_0^\tau\gamma_N G(\mu(t))dt+\tau J(\mu(\tau))\le \int_0^\tau\int_0^r\gamma_N G(\mu(z))dz\,dr+\int_0^\tau J(\mu(t))dt+C.$$
As before, we use Lemma \ref{lem_Gronwall} with $$\displaystyle \xi(\tau)=\int_0^\tau\int_0^r\gamma_N G(\mu(z))dz\,dr+\int_0^\tau J(\mu(t))dtdt,$$ and $\tau_0=v_m$. Thereafter we deduce that
$$\int_0^\tau\gamma_N G(\mu(t))dt+J(\mu(\tau))\le \frac{1}{v_m}\left(\int_0^{v_m}\int_0^r\gamma_N G(\mu(z))dz\,dr+\int_0^{v_m} J(\mu(t))dt+C\right).$$
Since the previous inequality hold as an equality whenever $\mu$ is replaced by $\phi$ we easily infer
$$\int_0^\tau\gamma_N G(\mu(t))dt+ J(\mu(\tau))\le\int_0^\tau\gamma_N G(\phi(t))dt+ J(\phi(\tau))$$ and for $\tau \to \infty$ the desired inequality \eqref{ineq_desired1} which concludes the proof.
\end{proof}

\section{The Bossel-Daners inequality (an alternative proof)}\label{sec_BD}

We conclude with a remark concerning the first Robin-Laplacian eiegenvalue defined by 
\begin{equation}
\label{classical}
\lambda_{1,\beta} (\Omega) = \mathop{\min_{w\in H^1(\Omega)}}_{ w\ne 0} \dfrac {\displaystyle \int_\Omega |\nabla w|^2 \, dx + \beta \displaystyle\int_{\partial \Omega} w^2 \, d \Hm }{\displaystyle\int_\Omega  w^2 \, dx}.
\end{equation}

It is known that a Faber-Krahn inequality (namely the so called Bossel-Daners inequality) for such eigenvalue holds true(\cite{Bo2,BD,BG10,BG15,D1})  that is

\begin{equation}
\label{FK}
\lambda_{1,\beta} (\Omega) \ge \lambda_{1,\beta} (\Omega^{\sharp}).
\end{equation}

Following an idea contained in \cite{K} we have the following

\begin{corollary}
Let $N=2$, then Theorem \ref{th_main_f} implies  \eqref{FK}.
\end{corollary}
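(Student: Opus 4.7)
The plan is to apply a Kesavan-type argument (see \cite{K}): take the first positive Robin eigenfunction of $\Omega$ as the forcing term in Problem \eqref{problem}, and then test the Rayleigh quotient \eqref{classical} on $\Omega^\sharp$ with the corresponding symmetrized solution of \eqref{problem_sharp}.

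First I would let $w_1\in H^1(\Omega)$ be a positive first eigenfunction associated with $\lambda_{1,\beta}(\Omega)$. Since $-\Delta w_1=\lambda_{1,\beta}(\Omega)\, w_1$ with the Robin boundary condition, $w_1$ is precisely the solution $u$ of \eqref{problem} with source $f=\lambda_{1,\beta}(\Omega)\, w_1\ge 0$, and $f^\sharp=\lambda_{1,\beta}(\Omega)\, w_1^\sharp$. Let $v$ denote the unique radial decreasing solution of \eqref{problem_sharp} with this datum. Testing the weak formulation of the PDE satisfied by $v$ against $v$ itself gives
\begin{equation*}
\int_{\Omega^\sharp}|\nabla v|^2\,dx+\beta\int_{\partial\Omega^\sharp}v^2\,\hs=\lambda_{1,\beta}(\Omega)\int_{\Omega^\sharp}w_1^\sharp\, v\,dx.
\end{equation*}

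Next I would use $v$ as a competitor in the Rayleigh quotient \eqref{classical} for $\Omega^\sharp$, apply the Cauchy-Schwarz inequality to the integral on the right-hand side, and invoke the equi-distribution identity $\|w_1^\sharp\|_{L^2(\Omega^\sharp)}=\|w_1\|_{L^2(\Omega)}$, to obtain
\begin{equation*}
\lambda_{1,\beta}(\Omega^\sharp)\le\frac{\lambda_{1,\beta}(\Omega)\int_{\Omega^\sharp}w_1^\sharp\, v\,dx}{\int_{\Omega^\sharp}v^2\,dx}\le\lambda_{1,\beta}(\Omega)\,\frac{\|w_1\|_{L^2(\Omega)}}{\|v\|_{L^2(\Omega^\sharp)}}.
\end{equation*}

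Finally, I would invoke Theorem \ref{th_main_f} in its planar version: for $N=2$ one has $N/(3N-4)=1$, so the second Lorentz estimate evaluated at $p=1$ collapses to the $L^2$ comparison $\|w_1\|_{L^2(\Omega)}\le \|v\|_{L^2(\Omega^\sharp)}$. Inserting this into the bound above makes the ratio on the right at most $1$, which yields \eqref{FK}. There is no real obstacle beyond this: once Theorem \ref{th_main_f} is established, the Bossel-Daners inequality in the plane falls out of the Kesavan scheme almost for free, exactly as the Dirichlet Faber-Krahn inequality falls out of Talenti's classical comparison. The only book-keeping point is to take $w_1>0$ so that the hypotheses $f\ge 0$, $f\not\equiv 0$ required by Theorem \ref{th_main_f} are met.
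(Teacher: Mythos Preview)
Your proof is correct and follows essentially the same Kesavan-type argument as the paper: take the first Robin eigenfunction as the source $f$, apply the $L^2$ comparison of Theorem~\ref{th_main_f} (which for $N=2$ is available at $p=1$ in the $L^{2p,2}$ scale), then combine the weak formulation for the symmetrized solution with Cauchy--Schwarz and the variational characterization \eqref{classical} on $\Omega^\sharp$. The only differences from the paper are notational and the order in which the chain of inequalities is written.
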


\begin{proof}
Let $u$ be the first Robin eigenfunction associated to  $\lambda_{1,\beta} (\Omega)$, then $u$ solves 
\begin{equation}\label{EP}
\left\{
\begin{array}{ll}
-\Delta u= \lambda_{1,\beta} (\Omega) u  & \mbox{in $\Omega$}\\\\
\dfrac{\partial u}{\partial \nu} +\beta \,u =0 & \mbox{on $\partial\Omega$.}
\end{array}
\right.
\end{equation}
Denoting by $z$ the solution to 
\begin{equation}\label{EPS}
\left\{
\begin{array}{ll}
-\Delta z= \lambda_{1,\beta} (\Omega) u^\sharp  & \mbox{in $\Omega^\sharp$}\\\\
\dfrac{\partial z}{\partial \nu} +\beta \,z=0 & \mbox{on $\partial\Omega^\sharp$.}
\end{array}
\right.
\end{equation}
Theorem  \ref{th_main_f} gives
\begin{equation}
\int_{\Omega} u^2 = \int_{\Omega^\sharp} (u^\sharp)^2 \le \int_{\Omega^\sharp} z^2
\end{equation}
then, by Cauchy - Schwarz inequality

\begin{equation}
\label{CS}
\int_{\Omega^\sharp} u^\sharp \, z \le \int_{\Omega^\sharp} z^2
\end{equation}
Multiplying equation \eqref{EPS} by $z$ and integrating 

\[
\lambda_{1,\beta} (\Omega) = \dfrac {\displaystyle \int_{\Omega^\sharp} |\nabla z|^2 \, dx + \beta \displaystyle\int_{\partial {\Omega^\sharp}} z^2 \, d \Hm }{\displaystyle\int_{\Omega^\sharp}  u^\sharp z \, dx} \ge 
\dfrac {\displaystyle \int_{\Omega^\sharp} |\nabla z|^2 \, dx + \beta \displaystyle\int_{\partial {\Omega^\sharp}} z^2 \, d \Hm }{\displaystyle\int_{\Omega^\sharp} z^2 \, dx}  \ge \lambda_{1,\beta} ({\Omega^\sharp})
\]

\end{proof}

\section{Conclusions and open problems}

Contrary to the classical comparison principle \cite{Ta} for the Poisson equation with Dirichlet boundary conditions, our result in general establishes only comparison in Lorentz spaces. In the hypothesis of Theorem \ref{th_main_1} when $N=2$ we have $u^\sharp\le v$ in $\Omega^\sharp$, therefore the following question arise.\\

\noindent{\bf Open Problem 1} Is, in the hypothesis of Theorem \ref{th_main_1}, $u^\sharp\le v$ in $\Omega^\sharp$ even for $N\ge 3$?\\

We already observed in the Introduction that in the hypothesis of Theorem \ref{th_main_f} when $N=2$ we have $\|u\|_{L^{1}(\Omega)}\le\|v\|_{L^{1}(\Omega^\sharp)}$ and $\|u\|_{L^{2}(\Omega)}\le\|v\|_{L^{2}(\Omega^\sharp)}$ and one may ask whether $\|u\|_{L^{p}(\Omega)}\le\|v\|_{L^{p}(\Omega^\sharp)}$ for other values of $p$. We know for sure that for large value of $p$ the answer is No. Example \ref{counter1} serves as a counterexample when $p=\infty$ and $N=2$. While Example \ref{counter2} serves as a counterexample for $p=2$ and $N=3$. Nevertheless the following question is still unsolved.\\

\noindent{\bf Open Problem 2} Is, in the hypothesis of Theorem \ref{th_main_f}, $\|u\|_{L^{1}(\Omega)}\le\|v\|_{L^{1}(\Omega^\sharp)}$ even for $N\ge 3$?\\

\begin{example}\label{counter1}
Consider $\Omega\subset\R^2$ equal to the union of two disjoint disk $D_1$ and $D_r$ with radii $1$ and $r$, respectively. We choose $\beta=\frac12$ and we fix $f=1$ on $D_1$ and $f=0$ on $D_r$. Both $u$ and $v$ in Theorem \ref{th_main_f} can be explicitly computed.\\
We have $\|u\|_{L^{\infty}(\Omega)}- \|v\|_{L^{\infty}(\Omega^\sharp)}=\frac{r^2}{4}+o(r^2)$
 \end{example}
\begin{example}\label{counter2}
Consider $\Omega\subset\R^3$ equal to the union of two disjoint balls $B_1$ and $B_r$ with radii $1$ and $r$, respectively. We choose $\beta=\frac12$ and we fix $f=1$ on $B_1$ and $f=0$ on $B_r$. Both $u$ and $v$ in Theorem \ref{th_main_f} can be explicitly computed. \\
We have $\|u\|^2_{L^{2}(\Omega)}- \|v\|^2_{L^{2}(\Omega^\sharp)}=\frac{8}{135}r^3+o(r^3)$
\end{example}

\section*{Acknowledgement}

This work was supported by GNAMPA grant 2018 "Aspetti Geometrici delle EDP e Disuguaglianze Funzionali in forma ottimale". We are grateful to Dorin Bucur for many interesting discussions and for having inspired Examples \ref{counter1}--\ref{counter2}.

\end{document}